\newtheorem{theorem}{Theorem}[section]
\newtheorem{cor}[theorem]{Corollary}
\newtheorem{lemma}[theorem]{Lemma}
\theoremstyle{definition}
\newtheorem{example}[theorem]{Example}
\newtheorem{remark}[theorem]{Remark}
\newcommand{\CC}{\mathfrak{C}}
\newcommand{\AddrSet}{\Omega}
\newcommand{\cone}[1]{#1\CC}
\newcommand{\nbd}{\nobreakdash-}
\newcommand{\prefix}{\preccurlyeq}
\newcommand{\set}[2]{\{\,#1 : #2\,\}}
\newcommand{\spt}[1]{\operatorname{Supt}(#1)}
\newcommand{\sw}[2]{ (\hspace{1pt}#1\;#2\hspace{1pt} )}
\newcommand{\seteq}{:=}
\title[Maximality of~$T$ in~$V$]{The maximality of~$T$ in Thompson's group~$V$}
\author{J. Belk, C. Bleak, M. Quick, and R. Skipper}
\thanks{{\flushleft \textit{MSC} (2020): 20E28; 20E32; 20F65; 20F05.}\\
{\flushleft \textit{Keywords: Thompson Groups, Maximal Subgroups, Infinite Simple Groups}}}
\begin{document}

\begin{abstract}
We show that R.\ Thompson's group~$T$ is a maximal subgroup of the group~$V$.  The argument provides examples of foundational calculations which arise when expressing elements of $V$ as products of transpositions of basic clopen sets in Cantor space~$\CC$.
\end{abstract}

\maketitle

\section{Introduction}

In this paper, we prove the foundational result that Thompson's group $T$ is a maximal subgroup of Thompson's group~$V$.  We were surprised to learn that this result has not previously appeared in the literature.  In our view, Higman's famous quote \cite{Higman1990} 
 applies: 

\begin{quote}
This paper can, I think, best be considered as a sort of five finger exercise. In it we polish the techniques from a corner of group theory\ldots
\end{quote}
where our corner of group theory explores the connection between the dynamics of an action and corresponding algebraic structures.

We shall work with Richard Thompson's group~$V$ viewed as a group of homeomorphisms of Cantor space $\CC = \{0,1\}^{\omega}$.  We view points of Cantor space as infinite sequences $w = x_{1}x_{2}\dots$ where each~$x_{i}$ is a letter from the alphabet~$\{0,1\}$.  We recommend the paper~\cite{CFP} as the standard introductory reference to the Thompson groups, where, in particular, the subgroup~$T$ of~$V$ is defined as consisting of those transformations that can be viewed as maps on the circle.  The reader may also find it convenient to consult the paper~\cite{BleakQuick} for the permutation notation that we employ here.  In particular, in the latter source, it is described how the transpositions of basic clopen subsets of~$\CC$ (that we term ``swaps'') generate the group~$V$.

An important theme throughout the study of groups is to describe their maximal subgroups.  In~\cite{BBQS2024}, we described technology to construct many examples of maximal subgroups of Thompson's group~$V$, and we asserted that $T$ is a maximal subgroup of $V$ that does not arise from this construction.  We subsequently learned that the maximality of $T$ in $V$ was not a known result.

Note that Thompson's group $F$ is maximal in $T$.  Indeed, since $F$ is the stabiliser of $0$ in the circle and $T$ acts 2-transitively on the orbit of $0$ in the circle, $F$ has only two double cosets in~$T$.  No similar statement holds for $T$ in $V$.  Indeed, it is not hard to show that the identity, the swap $\sw{00}{01}$, and the swap $\sw{00}{10}$
lie in different double cosets of $T$ in $V$.  (See Section~\ref{sec:prelim} for the definition of swaps.)

Section~\ref{sec:prelim} contains the preliminaries needed and describes how we handle the prefix substitutions in~$V$.  The proof of the theorem is conducted in Section~\ref{sec:arg}.

\subsection*{Acknowledgements}
We would like to thank Matthew G. Brin who asked us if we knew of a reference for our claimed result (in \cite{BBQS2024}) that $T$ was maximal in $V$, and encouraged us to write a proof of this claim. We also thank Pierre-Emmanuel Caprace for interesting comments on an earlier draft.

The fourth author was partially funded by NSF DMS-2005297.

For the purpose of open access, the authors have applied a CC BY public copyright Licence to any Author Accepted Manuscript version arising.

\section{Preliminaries}
\label{sec:prelim}

We shall write $\AddrSet = \{0,1\}^{\ast}$ for the set of finite words in the alphabet~$\{0,1\}$ and refer to the elements of~$\AddrSet$ as \emph{addresses}.  If $\alpha \in \AddrSet$, then this determines a \emph{cone}~$\cone{\alpha}$, which is the basic clopen subset of Cantor space~$\CC$ consisting of those elements with prefix~$\alpha$:
\[
\cone{\alpha} = \set{\alpha w}{w \in \CC}.
\]
We shall refer to~$\cone{\alpha}$ as the \emph{cone with address~$\alpha$} or, more simply, the \emph{cone at~$\alpha$}.  If $\alpha$~and~$\beta$ are addresses in~$\AddrSet$, we write $\alpha \prefix \beta$ when $\alpha$~is a prefix of~$\beta$.  We also write $\alpha \perp \beta$ when both $\alpha \not\prefix \beta$ and $\beta \not\prefix \alpha$ and, in this case, we say that $\alpha$~and~$\beta$ are \emph{incomparable}.  We shall also make use of the \emph{lexicographic order} on addresses.  We write $\alpha < \beta$ when $\alpha = a_{1}a_{2}\dots a_{r}$ is earlier than $\beta = b_{1}b_{2}\dots b_{s}$ in this order; that is, when, for some~$k$, \ $a_{1} = b_{1}$, \dots, $a_{k-1} = b_{k-1}$, $a_{k} = 0$ and $b_{k} = 1$.  There is similarly a lexicographic order~$<$ defined upon Cantor space.

Let $\cone{\alpha_{1}}$,~$\cone{\alpha_{2}}$, \dots,~$\cone{\alpha_{n}}$ and $\cone{\beta_{1}}$,~$\cone{\beta_{2}}$, \dots,~$\cone{\beta_{n}}$ be two collections of disjoint cones that each form a partition of~$\CC$.  We may define a map $g \colon \CC \to \CC$ by $\alpha_{i}w \mapsto \beta_{i}w$ for $i = 1$,~$2$, \dots,~$n$ and all $w \in \CC$.  We call such a map a \emph{prefix substitution map} and Thompson's group~$V$ may be defined as the group of all prefix substitution maps with composition as binary operation.  We may describe this prefix substitution map~$g$ by a \emph{paired tree diagram}, or \emph{tree-pair},~$(D,R,\sigma)$ where $D$~and~$R$ are finite binary rooted trees with $n$~leaves and $\sigma$ is a permutation of the points in~$\{1,2,\dots,n\}$.  The leaves of the domain tree~$D$ correspond to the addresses~$\alpha_{i}$ and are labelled $1$,~$2$, \dots~$n$ from left-to-right, while the leaves of the range tree~$R$ correspond to the addresses~$\beta_{j}$ and are labelled $1\sigma$,~$2\sigma$, \dots,~$n\sigma$ from left-to-right in such a way that if $\alpha_{i}$~corresponds to the leaf with some label~$k$ then $\beta_{i}$~is also labelled~$k$.  Note that if $\alpha_{1}$,~$\alpha_{2}$, \dots,~$\alpha_{n}$ are the addresses of the leaves of the rooted tree~$D$ appearing from left-to-right in its diagram then, in the lexicographic order, $\alpha_{1} < \alpha_{2} < \dots < \alpha_{n}$.

An element $g \in V$ has finite order if and only if we can represent it by a tree-pair~$(D,D,\sigma)$ where the domain and range trees are the same.  The permutation~$\sigma$ then represents a permutation of the leaves of~$D$.  (See \cite{CentralizersInV2013,BCST} for different proofs of this.)  We shall call such elements \emph{permutations} in~$V$.

In our argument, we make particular use certain prefix substitutions that we call \emph{swaps}.  If $\alpha$~and~$\beta$ are incomparable addresses (so the cones $\cone{\alpha}$~and~$\cone{\beta}$ are disjoint), we define the prefix substitution~$\sw{\alpha}{\beta}$ by
\[
p \cdot \sw{\alpha}{\beta} = \begin{cases}
p &\text{if $p \notin \cone{\alpha} \cup \cone{\beta}$,} \\
\beta w &\text{if $p = \alpha w$ for some $w \in \CC$,} \\
\alpha w &\text{if $p = \beta w$ for some $w \in \CC$.}
\end{cases}
\]
This swap is an element of~$V$ that has the effect of interchanging the cones $\cone{\alpha}$ and~$\cone{\beta}$.  We think of them as analogous to the familiar transpositions in symmetric groups.  It was observed in \cite{BleakQuick} that the swaps form a convenient generating set for $V$ and this leads to a ``Coxeter-type'' infinite presentation for $V$.  As in \cite{BleakQuick}, we extend this ``permutation of disjoint cones'' notation in the natural fashion for finite order elements of $V$ as products of disjoint cycles of cones, allowing us to write an element such as $(00\; 110\; 010\; 101 ) \, (011\; 111 )$ hopefully without inducing confusion in the reader.  Note that $\sw{\alpha}{\beta} = \sw{\alpha0}{\beta0} \, \sw{\alpha1}{\beta1}$ whenever $\alpha$~and~$\beta$ are addresses with $\alpha \perp \beta$.  Through this observation, it follows that every element of~$V$ can be written as an even number of swaps, hinting towards the simplicity of~$V$.

Finally, if $g \in V$, we define the \emph{support} of~$g$ to be the set
\[
\spt{g}\seteq \set{p\in \CC}{pg\neq p}
\]
and note that this is always a clopen subset of~$\CC$.  If $\spt{g} \neq \CC$, then we say that the map~$g$ is of \emph{small support}.

\section{The argument}
\label{sec:arg}

We noted in our preliminaries that an element~$g \in V$ is of finite order is represented by a tree-pair~$(D,D,\sigma)$.  For such a permutation~$g$, we shall say that \emph{$g$~admits an interleaved representation} if the tree~$D$ can be chosen so that if $\alpha$~is any leaf of~$D$ representing a cone~$\cone{\alpha}$ that is moved by~$g$, then the left-neighbour and right-neighbour leaves of~$\alpha$ (in the natural circular order on the leaves of~$D$) both represent cones that are fixed (necessarily pointwise) by~$g$.  In this case, we also call~$g$ an \emph{interleaved permutation}.  We also use the term \emph{interleaved swap} for a swap $\sw{\alpha}{\beta}$ that is an interleaved permutation.  Within our argument, we shall use interleaved permutations within various products to build other permutations.

\begin{example}
\label{ex:interleavedSwapProduct}
Let $\alpha,\beta\in \AddrSet$ with $\alpha\perp\beta$.  The formula
\[
\sw{\alpha}{\beta} = \sw{\alpha0}{\beta0}\,\sw{\alpha1}{\beta1}
\]
expresses the swap~$\sw{\alpha}{\beta}$ as a product of two swaps of small support, both of which are interleaved permutations.
\end{example}

Given a finite rooted binary tree~$A$, we shall say that a sequence $(\alpha_i)_{i=1}^n$ of leaves of~$A$ is \emph{interleaved} if every distinct pair $\alpha_{i}$~and~$\alpha_{j}$ of leaves are not neighbours in the natural left-to-right order on the leaves of~$A$.  We will say the sequence is \emph{ordered} if, for every such pair of indices with $i<j$, the leaf~$\alpha_i$ is to the left of the leaf~$\alpha_j$ (that is, $\alpha_{i} < \alpha_{j}$ in the lexicographic order on addresses).

We leave the following lemma to the reader.  The proof only requires techniques which appear in \cite{CFP}.

\begin{lemma}[The Interleaved Shaping Lemma]\label{lem:shapingInT}
Let $(\alpha_i)_{i=1}^k$ and $(\beta_i)_{i=1}^k$ be ordered, interleaved sequences of $k$ leaves on some finite rooted binary trees $A$ and $B$ respectively. Then there are finite rooted binary trees $D$ and $R$ with the same number of leaves and a bijection $\tau$ from the leaves of $D$ to the leaves of $R$ so that the tree-pair $(D,R,\tau)$ represents an element $g\in T$ such that, for each index~$i$, $\alpha_i$~is a leaf of~$D$, $\beta_i$~is a leaf of~$R$, and $\alpha_i\tau = \beta_i$.
\end{lemma}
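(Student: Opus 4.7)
The plan is to produce $g \in T$ directly via a suitable tree-pair. An element of $V$ represented by $(D,R,\sigma)$ lies in $T$ precisely when $\sigma$ is a cyclic rotation of the leaf labels; equivalently, writing $p_i$ for the left-to-right position of $\alpha_i$ among the leaves of $D$ and $q_i$ for the position of $\beta_i$ in $R$, the element is in $T$ iff the difference $p_i - q_i$ is constant modulo $n$, where $n$ is the common number of leaves. Thus the task reduces to producing $D$ and $R$ (with the required leaves) whose cyclic sequences of leaf-gaps between consecutive $\alpha_i$'s and $\beta_i$'s agree.

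I would begin by letting $a_i$ denote the number of leaves of $A$ strictly between $\alpha_i$ and $\alpha_{i+1}$ in the cyclic order (indices modulo $k$, so $a_k$ counts the leaves right of $\alpha_k$ together with those left of $\alpha_1$), and defining $b_i$ analogously on $B$. Reading the interleaving hypothesis cyclically --- as is natural given the use of ``circular order'' in the definition of an interleaved permutation earlier in the paper --- each $a_i$ and each $b_i$ is at least~$1$.

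The crux is then to refine $A$ to a tree $D$ whose cyclic gap sequence is $(c_1,\ldots,c_k)$, where $c_i \seteq \max(a_i, b_i)$. This is done iteratively: within the $i$-th gap, replace some leaf that is not one of the $\alpha_j$'s by its two children, which increases the leaf-count in that gap by $1$ while leaving the other gaps and the $\alpha_j$'s alone. Since each $a_i \geq 1$ at the outset and refinement only adds leaves, such a leaf is always available until the target count is reached. An analogous refinement of $B$ yields $R$ with the same cyclic gap sequence, so $D$ and $R$ share the common leaf count $n = k + \sum_i c_i$.

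To finish, set $c \equiv p_1 - q_1 \pmod{n}$ and let $\tau$ be the cyclic shift $j \mapsto j - c \pmod{n}$ on leaf positions. Matching cyclic gap sequences forces $p_i - q_i \equiv c$ for every $i$, so $\tau$ sends $\alpha_i$ to $\beta_i$; the tree-pair $(D, R, \tau)$ therefore represents an element $g \in T$ with the required properties. The main technical point is the cyclic reading of ``interleaved'', which guarantees that every cyclic gap contains a leaf available for subdivision; once that is in place, the rest is standard bookkeeping with tree-pairs as in~\cite{CFP}.
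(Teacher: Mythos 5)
Your argument is correct, and since the paper simply leaves this lemma to the reader, it supplies exactly the standard proof one would want: equalise the cyclic gap sequences of the two marked trees by subdividing unmarked leaves, then take $\tau$ to be the resulting cyclic shift. The one genuinely delicate point is the one you flag explicitly: ``interleaved'' must be read cyclically, so that the wrap-around gap between $\alpha_k$ and $\alpha_1$ also contains an unselected leaf; under the strictly linear reading the subdivision step has nothing to split in that gap and the statement is in fact false --- for instance with $A$ having leaves $00,01,1$ and $\alpha_1=00$, $\alpha_2=1$, and $B$ having leaves $00,01,10,11$ with $\beta_1=01$, $\beta_2=11$, no element of $T$ can send $\cone{00}$ to $\cone{01}$ and $\cone{1}$ to $\cone{11}$, since the arc $\cone{01}$ would have to map onto the non-arc $\cone{00}\cup\cone{10}$. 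With the cyclic reading every cyclic gap is nonempty, your subdivision always has a leaf available, and the concluding step (equal gap sequences force $p_i-q_i$ to be constant mod $n$, so the cyclic shift $\tau$ carries each $\alpha_i$ to $\beta_i$ and $(D,R,\tau)$ defines an element of $T$) is routine.
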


\begin{cor}\label{cor:interleavedSwaps}
If $\alpha,\beta\in\AddrSet$ with $\alpha\perp\beta$ such the swap $\sw{\alpha}{\beta}$ is an interleaved permutation, then there exists $g\in T$ such that $\sw{\alpha}{\beta}^g = \sw{00}{10}$.  In particular, any two interleaved swaps in~$V$ are conjugate to each other using an element of~$T$.
\end{cor}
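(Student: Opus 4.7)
The plan is to reduce the statement to a direct application of the Interleaved Shaping Lemma applied to the two-element sequences $(\alpha,\beta)$ and $(00,10)$, after ensuring the required orderings.

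Since the swap is symmetric in its arguments, i.e.\ $\sw{\alpha}{\beta} = \sw{\beta}{\alpha}$, I may assume without loss of generality that $\alpha < \beta$ in the lexicographic order. Because $\sw{\alpha}{\beta}$ is an interleaved permutation by hypothesis, there is a finite rooted binary tree $A$ having both $\alpha$ and $\beta$ as leaves, such that their left- and right-neighbour leaves in $A$ are fixed by $\sw{\alpha}{\beta}$. In particular, $\alpha$ and $\beta$ are not adjacent to one another in the left-to-right order on the leaves of $A$, so the length-two sequence $(\alpha, \beta)$ is an ordered, interleaved sequence of leaves of $A$. On the other side, take $B$ to be the complete depth-two binary tree, with leaves $00, 01, 10, 11$ from left to right; then $(00, 10)$ is ordered (since $00 < 10$) and interleaved (since $01$ sits between them).

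Now I would apply Lemma~\ref{lem:shapingInT} to these two sequences to produce trees $D$ and $R$, a bijection $\tau$ from the leaves of $D$ to the leaves of $R$, and an element $g\in T$ represented by $(D,R,\tau)$ with $\alpha$, $\beta$ among the leaves of $D$, with $00$, $10$ among the leaves of $R$, and with $\alpha\tau = 00$ and $\beta\tau = 10$. This means $g$ carries the cone $\cone{\alpha}$ onto the cone $\cone{00}$ and the cone $\cone{\beta}$ onto the cone $\cone{10}$ by the prefix substitution rule. A direct computation of the conjugate then yields
\[
\sw{\alpha}{\beta}^{g} = \sw{\alpha g}{\beta g} = \sw{00}{10},
\]
as required.

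For the final sentence, if $\sw{\alpha_{1}}{\beta_{1}}$ and $\sw{\alpha_{2}}{\beta_{2}}$ are any two interleaved swaps in $V$, each is conjugated to $\sw{00}{10}$ by a suitable element $g_{1}, g_{2}\in T$ by the preceding paragraph, so $g_{1}g_{2}^{-1}\in T$ conjugates one to the other. The only real subtlety is verifying the hypotheses of the Interleaved Shaping Lemma, but these follow immediately from the definition of an interleaved swap together with the choice of $B$ as the depth-two complete tree; I do not expect any genuine obstacle in carrying the argument out.
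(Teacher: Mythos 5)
Your proof is correct and is precisely the intended argument: the paper states this corollary without proof as an immediate consequence of Lemma~\ref{lem:shapingInT}, and your application of that lemma to the sequences $(\alpha,\beta)$ and $(00,10)$ (with the depth-two complete tree as $B$) is exactly the reduction the authors have in mind. The verification that $(\alpha,\beta)$ is ordered and interleaved, and the final computation $\sw{\alpha}{\beta}^{g}=\sw{\alpha g}{\beta g}$, are both handled correctly.
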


\begin{lemma}\label{lem:SwampWithT}
If $\sw{\alpha}{\beta}$ is a swap of small support then $ {\langle} \,\sw{\alpha}{\beta},T\, {\rangle} = V$.
\end{lemma}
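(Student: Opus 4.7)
The plan is to reduce the lemma to showing that $H := \langle \sw{\alpha}{\beta},T\rangle$ contains at least one interleaved swap. Once such a swap is in~$H$, Corollary~\ref{cor:interleavedSwaps} (applied with $T\subseteq H$) places every interleaved swap in~$H$; Example~\ref{ex:interleavedSwapProduct} expresses every swap as a product of two interleaved swaps, hence puts every swap into~$H$; and since the swaps generate~$V$, we conclude $H=V$.

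To exhibit an interleaved swap in~$H$, I would start from Example~\ref{ex:interleavedSwapProduct} applied to $\sw{\alpha}{\beta}$ itself:
\[
\sw{\alpha}{\beta} \;=\; s_{0}\,s_{1}, \qquad s_{0}:=\sw{\alpha 0}{\beta 0},\ s_{1}:=\sw{\alpha 1}{\beta 1},
\]
where $s_{0}$ and $s_{1}$ are both interleaved and have disjoint (hence commuting) supports. Corollary~\ref{cor:interleavedSwaps} gives a $t_{0}\in T$ with $s_{1}=s_{0}^{\,t_{0}}$, so $\sw{\alpha}{\beta}=s_{0}\cdot s_{0}^{\,t_{0}}$; the task is to isolate~$s_{0}$ (or any interleaved swap) inside~$H$.

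This is where the small support hypothesis enters decisively. I would fix an address~$\gamma$ with $\cone{\gamma}$ disjoint from $\cone{\alpha}\cup\cone{\beta}$ and use $\cone{\gamma}$ as ``scratch space'' to apply the Interleaved Shaping Lemma (Lemma~\ref{lem:shapingInT}), producing $T$-elements~$t$ that place cones in prescribed positions. For such a~$t$, the conjugate $\sw{\alpha}{\beta}^{t}$ typically fails to be a single swap---because $t$'s domain tree need not refine to one containing $\alpha$ and $\beta$ as leaves, so $\cone{\alpha}$ and $\cone{\beta}$ get split---and instead is a product of disjoint swaps at a finer scale. Suitable products of $\sw{\alpha}{\beta}$ with such conjugates, composed with further elements of~$T$ chosen to cancel unwanted factors, should then leave a single interleaved swap as an element of~$H$.

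The main obstacle is the combinatorial construction in the previous paragraph. A single $T$-conjugate of $\sw{\alpha}{\beta}$ that is itself a swap preserves the ``endpoint-adjacency'' structure of the original pair---$T$ is rigid enough that it cannot convert a non-interleaved swap into an interleaved one by conjugation alone---so one genuinely needs to work with products, and with conjugates by $T$-elements whose domain tree does not contain $\cone{\alpha}$ and $\cone{\beta}$ as leaves. Tracking which cones are permuted by such products, and arranging that every factor other than the intended interleaved swap lies in~$T$, is the delicate combinatorial heart of the argument.
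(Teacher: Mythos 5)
Your reduction is the right one and matches the paper's endgame: once $H:=\langle\sw{\alpha}{\beta},T\rangle$ contains a single interleaved swap, Corollary~\ref{cor:interleavedSwaps} gives all interleaved swaps, Example~\ref{ex:interleavedSwapProduct} then gives all swaps, and the swaps generate~$V$. But the one step that actually requires an idea --- producing an interleaved swap in~$H$ when $\sw{\alpha}{\beta}$ is not already interleaved --- is exactly the step you leave as a sketch (``suitable products \dots should then leave a single interleaved swap''). Your starting point $\sw{\alpha}{\beta}=s_{0}\,s_{0}^{\,t_{0}}$ does not obviously let you isolate~$s_{0}$, and the proposed route via conjugates that split the cones $\cone{\alpha}$, $\cone{\beta}$ and subsequent cancellation is never carried out; as written, the proof is incomplete at its crux. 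You also never use the small-support hypothesis in a load-bearing way, even though you flag it as decisive, and the Remark after the lemma shows the statement is false without it, so any complete argument must invoke it concretely.

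For comparison, the paper's way past the obstruction you correctly identify (that conjugation by $T$ alone cannot turn a non-interleaved swap into an interleaved one) is to conjugate by an element of $H$ that is \emph{not} in $T$. Small support guarantees a tree $D$ with at least four leaves containing $\alpha$, $\beta$, and a leaf $\gamma$ outside the support; if $\alpha$ and $\beta$ are adjacent leaves, a cyclic rotation $g\in T$ of the leaves of $D$ gives $\sw{\alpha}{\beta}^{g}=\sw{\beta}{\gamma}\in H$, and then
\[
\sw{\alpha}{\beta}^{\sw{\beta}{\gamma}}=\sw{\alpha}{\gamma}\in H,
\]
with one of $\sw{\beta}{\gamma}$, $\sw{\alpha}{\gamma}$ interleaved (here the fourth leaf is what makes $\cone{\alpha}\cup\cone{\beta}\cup\cone{\gamma}\neq\CC$). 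Something of this nature --- a short, explicit computation exploiting an auxiliary cone outside the support --- is what your proposal is missing.
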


\begin{proof}
Let $(D,D,\sigma)$ be a tree-pair representing $\sw{\alpha}{\beta}$ with $\alpha$~and~$\beta$ being leaves in the tree~$D$. As $\sw{\alpha}{\beta}$~is a swap of small support the tree $D$ has at least three leaves.  If necessary, replace $D$ by a new tree $D$ with at least four leaves by splitting a leaf of the original tree $D$ that represents a cone not in the support of~$\sw{\alpha}{\beta}$. 

If it is the case that $\alpha$~and~$\beta$ are adjacent leaves of~$D$, then conjugate~$\sw{\alpha}{\beta}$ by an element $g \in T$ with representative tree-pair $(D,D,\rho)$ where $\rho$ is a cyclic rotation of the leaves of~$D$ so that $\sw{\alpha}{\beta}^g = \sw{\beta}{\gamma}$ where the leaf~$\gamma$ represents the cone~$\cone{\gamma}$ that is disjoint from the support of $\sw{\alpha}{\beta}$ and, by our assumption that $D$~has at least four leaves, $\cone{\alpha} \cup \cone{\beta} \cup \cone{\gamma} \neq \CC$.  We now compute that
\[
\sw{\alpha}{\beta}^{\sw{\beta}{\gamma}}=\sw{\alpha}{\gamma}.
\]
Either $\sw{\beta}{\gamma}$ or $\sw{\alpha}{\gamma}$ is an interleaved swap.  We may therefore assume that $\sw{\alpha}{\beta}$ was an interleaved swap in the first place.

The subgroup generated by~$\sw{\alpha}{\beta}$ and~$T$ then contains all interleaved swaps by Corollary~\ref{cor:interleavedSwaps} and hence all swaps by Example~\ref{ex:interleavedSwapProduct}.  Therefore this subgroup equals~$V$.
\end{proof}

\begin{remark}
Note that in the above lemma we cannot remove the stated restriction to a swap of small support since $\sw{0}{1}$ is actually an element of $T$ and $T$ is a proper subgroup of~$V$.
\end{remark}

\begin{lemma}\label{lem:VFromThreeAndT}
If $ (\alpha_1\;\alpha_2\;\alpha_3 )$ is a $3$\nbd cycle of small support then
\[
\langle (\alpha_1\;\alpha_2\;\alpha_3 ),T \rangle = V.
\]
\end{lemma}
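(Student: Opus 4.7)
The plan is to reduce to Lemma~\ref{lem:SwampWithT} by exhibiting a swap of small support inside the subgroup $H \seteq \langle\,(\alpha_1\;\alpha_2\;\alpha_3),\,T\,\rangle$; Lemma~\ref{lem:SwampWithT} then yields $H = V$.

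Write $\sigma = (\alpha_1\;\alpha_2\;\alpha_3)$, and use the small-support hypothesis to pick a cone $\cone{\alpha_0}$ disjoint from $\cone{\alpha_1}\cup\cone{\alpha_2}\cup\cone{\alpha_3}$. In the favourable situation where $\{\alpha_1,\alpha_2,\alpha_3,\alpha_0\}$ is a partition of $\CC$ into four cones, the cyclic rotation
\[
t \seteq (\alpha_1\;\alpha_2\;\alpha_3\;\alpha_0)
\]
of these four leaves is represented by a four-leaf tree-pair whose leaf permutation is a $4$-cycle, and so lies in~$T$. A direct calculation then gives
\[
\sigma^{-1}\,t \;=\; (\alpha_1\;\alpha_3\;\alpha_2)\,(\alpha_1\;\alpha_2\;\alpha_3\;\alpha_0) \;=\; \sw{\alpha_1}{\alpha_0},
\]
which is a swap of small support because $\cone{\alpha_2}\cup\cone{\alpha_3}$ is a nonempty clopen set disjoint from $\cone{\alpha_1}\cup\cone{\alpha_0}$. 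Applying Lemma~\ref{lem:SwampWithT} to this swap completes the proof in this case.

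The main obstacle is to reduce the general case, in which $\CC\setminus(\cone{\alpha_1}\cup\cone{\alpha_2}\cup\cone{\alpha_3})$ is a union of several cones rather than a single one, back to the favourable case. The approach is to conjugate $\sigma$ by a carefully chosen element of~$T$ constructed via Lemma~\ref{lem:shapingInT}, possibly after first refining the cones~$\alpha_i$ to sub-cones so that the resulting addresses form the interleaved ordered sequences required by that lemma. The key algebraic input behind the construction above is that on a partition of~$\CC$ into an even number $n$ of cones, the $T$-rotation of the leaves by~$1$ is an odd permutation in~$S_n$, and this, together with the 3-cycle~$\sigma$, is what allows us to escape the alternating-group barrier and isolate a swap of small support. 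Once such a swap is in~$H$, Lemma~\ref{lem:SwampWithT} finishes the argument.
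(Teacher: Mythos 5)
Your ``favourable case'' is essentially sound and uses a genuinely different mechanism from the paper: there the rotation $r$ of the four leaves is an odd permutation in $S_4$ lying in $T$, and multiplying the $3$\nbd cycle by it yields a transposition of small support, to which Lemma~\ref{lem:SwampWithT} applies. One correction even here: a $4$\nbd cycle of the four leaves lies in $T$ only when it is one of the two genuine rotations, i.e.\ when the cyclic order written in the cycle agrees with the circular order of the leaves, so $(\alpha_1\;\alpha_2\;\alpha_3\;\alpha_0)$ need not lie in $T$ for an arbitrary labelling. This is repairable: take $r\in T$ to be the actual rotation by one step and check (as your parity argument suggests, plus a short case check) that one of $\sigma r$, $\sigma^{-1}r$ is a transposition of two of the four cones.

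The genuine gap is the reduction of the general case to the favourable one: this is not merely ``the main obstacle'' left to a routine conjugation, it is impossible by conjugation in $T$. An element $g\in T$ preserves the cyclic order on the circle, so if $g$ carries each $\cone{\alpha_i}$ onto a cone, it carries each of the three cyclic ``gaps'' between consecutive cones of the support onto the corresponding gap of the image configuration; hence the number of nonempty gaps (the number of connected components of the complement of the support, viewed on the circle) is a $T$\nbd conjugation invariant. In your favourable case this number is $1$, but for the $3$\nbd cycle $(00\;011\;101)$ all three gaps $\cone{010}$, $\cone{100}$, $\cone{11}$ are nonempty, so no $T$\nbd conjugate of it has complement a single cone. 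Refining the $\alpha_i$ does not help: replacing each $\alpha_i$ by its two children turns the element into a product of two $3$\nbd cycles of cones, so it is no longer a single $3$\nbd cycle and your identity no longer applies. A different idea is therefore needed in general; the paper's is to multiply a normalised interleaved $3$\nbd cycle by a $T$\nbd conjugate of itself to obtain a product of two disjoint swaps, conjugate that by the $3$\nbd cycle to make it interleaved, and then multiply two further $T$\nbd conjugates of the result to isolate the single swap $\sw{0}{10}$ via the identity $\sw{\alpha}{\beta}=\sw{\alpha0}{\beta0}\,\sw{\alpha1}{\beta1}$, after which Lemma~\ref{lem:SwampWithT} applies as in your plan.
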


\begin{proof}
First we may assume (via conjugation arguments similar to those used in Lemma~\ref{lem:SwampWithT}) that  $ (\alpha_1\;\alpha_2\;\alpha_3 )$ is an interleaved permutation.  Then employing Lemma \ref{lem:shapingInT} we may find an element $t \in T$ whose definition includes the prefix substitutions $\alpha_{1} \mapsto 00$, $\alpha_{2} \mapsto 100$ and $\alpha_{3} \mapsto 1100$.  Then
\[
(\alpha_1\;\alpha_2\;\alpha_3)^{t} = (00\;100\;1100).
\]
It therefore remains to show that the subgroup $H = \langle (00\;100\;1100), T \rangle$ is equal to~$V$.

Firstly, there is an element $x\in T$ such that 
\[
(00\;100\;1100)^x =  (00\;100\;1110)
\]
so $H$~contains
\[
\sw{00}{1110} \sw{100}{1100}= (00\;100\;1100) (00\;100\;1110).
\]
Now conjugate this product of disjoint swaps by $(00\;100\;1100)$ to see that $H$~also contains
\[
\bigl( \sw{00}{1110} \sw{100}{1100} \bigr)^{(00\;100\;1100)} = \sw{00}{1100} \sw{100}{1110}.
\]
Observe that this is an interleaved permutation and that the addresses corresponding to its support are $00 < 100 < 1100 < 1110$ in the lexicographic order.  Hence, by use of Lemma~\ref{lem:shapingInT}, there are elements $y,z\in T$ such that
\[
\bigl( \sw{00}{1100}\sw{100}{1110} \bigr)^y = \sw{000}{1000}\sw{010}{1010}\]
and
\[
\bigl(\sw{00}{1100}\sw{100}{1110}\bigr)^z = \sw{001}{1001}\sw{011}{1011}.
\]
The subgroup~$H$ therefore contains the product of these last two elements, namely the swap~$\sw{0}{10}$.  Hence $\sw{0}{10}\in  \langle (\alpha_1\;\alpha_2\;\alpha_3 ),T \rangle$, and so by Lemma \ref{lem:SwampWithT} we deduce $\langle (\alpha_1\;\alpha_2\;\alpha_3 ),T \rangle = V$.
\end{proof}

\begin{theorem}
\label{thm:max}
Thompson's group $T$ is a maximal subgroup of Thompson's group~$V$.
\end{theorem}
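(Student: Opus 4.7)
Take any $g\in V\setminus T$; the goal is to show $\langle T,g\rangle = V$. By Lemma~\ref{lem:SwampWithT} it suffices to produce a swap of small support inside $\langle T,g\rangle$, and by Lemma~\ref{lem:VFromThreeAndT} it would alternatively suffice to produce a $3$-cycle of small support. My plan is, first, to replace $g$ by a small-support element of $\langle T,g\rangle$, and then to whittle that element down, using the Interleaved Shaping Lemma together with commutator tricks, until only a swap or $3$-cycle of small support remains.

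For the first step, fix a tree-pair $(D,R,\sigma)$ representing $g$, pick a leaf $\alpha$ of $D$ with image leaf $\beta$ of $R$, and let $t\in T$ be a circle rotation sending $\cone{\alpha}$ onto $\cone{\beta}$ by the prefix substitution $\alpha w\mapsto\beta w$. Then $h\seteq gt^{-1}\in\langle T,g\rangle$ fixes $\cone{\beta}$ pointwise, so $\spt{h}\neq\CC$. Moreover $h\neq 1$ (otherwise $g=t\in T$) and $h\notin T$ (otherwise $g=ht\in T$). Replacing $g$ by $h$, I may assume from the outset that $g$ itself is a non-identity element of $V\setminus T$ of small support.

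For the second step, choose a tree-pair $(D',R',\tau)$ for $g$ in which the fixed cone appears as a leaf of both trees. Since $g\neq 1$ and $g\notin T$, the permutation $\tau$ is non-trivial but not cyclic, so it has at least one non-trivial cycle on the remaining leaves. Using Lemma~\ref{lem:shapingInT} and conjugation by $T$, I move the cones of one such cycle into standard interleaved positions inside a sufficiently fine tree. Then, following the pattern of the proofs of Lemmas~\ref{lem:SwampWithT} and~\ref{lem:VFromThreeAndT}, I form products and commutators of $T$-conjugates of $g$, choosing them so that their actions on all cones outside the target cycle cancel, yielding an interleaved swap or interleaved $3$-cycle of small support. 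Applying Lemma~\ref{lem:SwampWithT} or Lemma~\ref{lem:VFromThreeAndT} then gives $\langle T,g\rangle = V$. The main obstacle is precisely this cancellation step: because $g$ need not be of finite order, the ``internal'' action of $g$ on each cone of $\spt{g}$ must also be neutralised, which requires a careful choice of the $T$-elements appearing in each commutator so that their supports, together with their $g$-images, interact with $\spt{g}$ in exactly the right prescribed way.
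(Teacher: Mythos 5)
Your first reduction goes in the right direction but stops short of what is needed, and the second step has a genuine gap that you yourself flag without closing. After replacing $g$ by $h=gt^{-1}$ you obtain a non-identity element of $V\setminus T$ of small support, but $h$ is in general of \emph{infinite} order: its tree-pair $(D',R',\tau)$ has $D'\neq R'$, so $\tau$ is merely a bijection between two different sets of leaf addresses and it does not make sense to speak of its ``cycles'' on the remaining leaves, nor to move ``the cones of one such cycle'' into interleaved position. The entire difficulty of the theorem is concentrated in the sentence you end with: neutralising the internal action of $g$ on the cones of $\spt{g}$ so that a commutator collapses to a swap or $3$-cycle. That step is not a routine imitation of Lemmas~\ref{lem:SwampWithT} and~\ref{lem:VFromThreeAndT} (both of which start from elements that are already finite-order permutations of cones), and as written the argument does not establish it. (There is also a small slip: with the paper's right-action convention, $h=gt^{-1}$ fixes $\cone{\alpha}$ pointwise, not $\cone{\beta}$.)

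The idea you are missing is to choose the auxiliary element of $T$ so as to kill the internal dynamics \emph{all at once}: given a tree-pair $(D,R,\sigma)$ for $a$, take $b\in T$ carrying the whole range tree $R$ back onto the domain tree $D$ by a cyclic rotation of leaf labels, with the rotation chosen so that at least one leaf returns to its original position. Then $c\seteq ab$ has a tree-pair of the form $(D,D,\sigma\rho)$, i.e.\ it is an honest finite-order permutation of the cones of a single tree, fixing at least one cone pointwise, and still lying outside $T$. From that point every subsequent manipulation really is a permutation of cones, and one well-chosen conjugate $e=c^{d}$ (with $d\in T$ sending the rightmost moved leaf to $\cone{10}$ and its fixed right-neighbour to $\cone{11}$) makes the single commutator $[c,e]$ a $3$-cycle $\sw{10\;\;10e}{10c}$; either it has small support and Lemma~\ref{lem:VFromThreeAndT} finishes, or full support forces $c=\sw{10}{11}$ and Lemma~\ref{lem:SwampWithT} finishes. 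Without the passage to a finite-order permutation, the ``cancellation'' you describe would have to be constructed by hand for an element with arbitrary expanding/contracting behaviour on its support, and no mechanism for doing so is given.
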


\begin{proof}
Let $a \in V\backslash T$.  Representing $a$ by a tree-pair with some domain tree and some range tree, there is an element $b$ of~$T$ that takes the range tree of~$a$ to the domain tree while returning at least one image cone to its original location.  The product $c \seteq ab$ is then a permutation in~$V$ which fixes at least one cone and which is not in~$T$.   Through conjugation by an element of $T$ we may assume that $c$~fixes the cone at~$0$ and that there is a tree-pair $(D,D,\sigma)$ representing $c$ so that $0$ and $10$ are leaves of $D$ and that the cone $\cone{10}$ is moved to some other cone to the right of~$\cone{10}$ by $c$ (that is, to a cone $\cone{11\beta}$ for some~$\beta$, where $11\beta$~is also a leaf of~$D$).

Let us write $c$ as a product of its disjoint cycles of cones as follows:
\[
c = \prod_{i=1}^{r}  (\alpha_{i1}\;\alpha_{i2}\;\cdots \; \alpha_{ik_i})
\]
where we may assume $\alpha_{11}=10$ and that all addresses $\alpha_{ij}$ represent leaves of the tree $D$ and so in particular have~$1$ as prefix.
(We refer to this as a cycle decomposition of the permutation~$c$.  As is usual, we omit any cycles of length one.)

Let $\alpha_{ij}$ be the address appearing in the cycle decomposition of~$c$ that is rightmost as a leaf in the tree~$D$ among such addresses.  (Thus the cone $\cone{\alpha_{ij}}$ is moved by~$c$ to a cone $\cone{\alpha_{ik}}$ that is to the left of $\cone{\alpha_{ij}}$; that is, $\alpha_{ik} < \alpha_{ij}$ in the lexicographic order on addresses.)  Further, let $\gamma$ be the next leaf of $D$ to the right of $\alpha_{ij}$ in the cyclic order on the leaves of $D$, so necessarily $c$ fixes the cone $\cone{\gamma}$ pointwise.  (If it were the case that $\alpha_{ij}$~is the address of the rightmost leaf of the tree~$D$, then $\gamma = 0$, the leftmost leaf of~$D$.)  Now let $d$ be an element of $T$ with tree pair $(D,R,\tau)$ where the tree~$R$ has both $10$ and $11$ as leaves and where $d$ takes $\cone{\alpha_{ij}}$ to the cone~$\cone{10}$ and $\cone{\gamma}$ to the cone~$\cone{11}$.

 The conjugate element $e\seteq c^{d}$ is then an element of~$V$ which is a permutation on the leaves of the tree~$R$, which fixes the points in the cone~$\cone{11}$ and has a cycle decomposition with the address~$10$ as an entry in one of the cycles.  Consequently, all other addresses in this cycle decomposition have~$0$ as prefix.

From this construction a straightforward computation shows the commutator $[c,e]$ is the $3$\nbd cycle $f\seteq (\beta_1\;\beta_2\;\beta_3)$ with $\beta_{1} = 10$, $\beta_{2}$~equal to the image of~$10$ under~$e$ and $\beta_{3}$~equal to the image of~$10$ under~$c$.  In particular, $\beta_{2}$~has $0$~as prefix and $\beta_{3}$~has $1$~as prefix.  Consequently, if $f$~were to have full support, then necessarily $\beta_{2} = 0$ and $\beta_{3} = 11$.  Since $c$~fixes the points in the cone~$\cone{0}$, this forces $c = \sw{10}{11}$.  Then $V = \langle c, T \rangle$ by Lemma~\ref{lem:SwampWithT} and $V = \langle a,T \rangle$ follows.

Let us suppose then that $f$~does not have full support.  In this case, $V = \langle f,T \rangle$ by Lemma~\ref{lem:VFromThreeAndT}.  Since $f \in \langle a,T \rangle$, we conclude $V = \langle a,T \rangle$.  This shows that $T$~is indeed a maximal subgroup of~$V$.
\end{proof}

\bibliographystyle{amsplain}
\bibliography{bibstuff}

\newpage
\noindent
James Belk, \texttt{jim.belk@glasgow.ac.uk}\\
School of Mathematics \& Statistics, University of Glasgow, Glasgow, UK\\[5pt]
Collin Bleak, \texttt{collin.bleak@st-andrews.ac.uk}\\
School of Mathematics \& Statistics, University of St Andrews, St Andrews, UK\\[5pt]
Martyn Quick, \texttt{mq3@st-andrews.ac.uk}\\
School of Mathematics \& Statistics, University of St Andrews, St Andrews, UK\\[5pt]
Rachel Skipper, \texttt{rachel.skipper@utah.edu}\\
Department of Mathematics, University of Utah, Salt Lake City, Utah, USA

\end{document}